 \newtheorem{theorem}{Theorem}
\newtheorem{corollary}{Corollary}
\newtheorem{lemma}{Lemma}
\journal{--}
\begin{document}

\begin{frontmatter}

\title{Bounds on prime gaps as a consequence of the divergence of the series of reciprocal primes}

%\tnotetext[label0]{This is only an example}

\author[label1]{Douglas Azevedo\corref{cor1}}
\address[label1]{UTFPR-CP- Caixa Postal 238,  Av. Alberto Carazzai, 1640, 863000-000, Cornelio Procopio - Brazil}
%\address[label2]{Address Two\fnref{label4}}

\cortext[cor1]{I am corresponding author}
%\fntext[label3]{I also want to inform about\ldots}
%\fntext[label4]{Small city}

\ead{dgs.nvn@gmail.com}
%\ead[url]{author-one-homepage.com}

%\author[label1,label5]{Author Three}
%\ead{author.three@mail.com}

\begin{abstract}
In this paper, using  the well known fact that the series of reciprocals of primes diverges, we obtain a general inequality for  gaps of consecutive primes that holds for infinitely many primes. As it is shown the key ingredient for this direct approach is  a consequence of the the Kummer's characterization of summable sequences of positive terms. Some interesting consequences are then presented. In particular, we show how the twin-prime conjecture is related to our main result.  
\end{abstract}

\begin{keyword}
%% keywords here, in the form: keyword \sep keyword
%example \sep \LaTeX \sep template
gaps, primes, Kummer's test, Firoozbakht's conjecture.
%% MSC codes here, in the form: \MSC code \sep code
\MSC[2010] 11A41, 28A20 .
%% or \MSC[2008] code \sep code (2000 is the default)
\end{keyword}

\end{frontmatter}

%%
%% Start line numbering here if you want
%%
% \linenumbers

%% main text
\section{Introduction}

Let $p_n$ denote the $n$th prime number. The theory related to such numbers includes some of the most interesting  open problems and conjectures available in mathematics.
Among these open problems,  the topic that deals with the gap between consecutive prime numbers, that is, the one that  investigates the behaviour of the sequence $g_n=p_{n+1}-p_n$, has attracted attention of the  most prominent mathematicians in the area. 

 In (\cite{gold}) Dan  Goldston, J\'anos Pintz, and Cem Yildirim (also indicated as GPY)  presented a solution for 
  a long-standing open problem. They proved that there are infinitely many primes for which the gap to the next
prime is as small as we want compared to the average gap between consecutive primes. They showed that 
\begin{equation}\label{GPY}
\liminf_{n\to\infty} \frac{p_{n+1}-p_{n}}{\ln(p_n)}=0.
\end{equation}
There, the approach adopted is usually referred   as the
level of distribution of primes in arithmetic progressions and, with an additional assumption on this level of distribution the showed that
\begin{equation}\label{GPYa}
\liminf_{n\to\infty} p_{n+1}-p_{n}\leq 16.
\end{equation}
Latter, in \cite{gold1} the same authors considerably improved \eqref{GPY}
proving that 
\begin{equation}\label{GPY1}
\liminf_{n\to\infty} \frac{p_{n+1}-p_{n}}{\sqrt{\ln(p_n)}\ln(\ln(p_n))^2}<\infty.
\end{equation}
This result shows that there exist pairs of primes nearly
within the square root of the average spacing.

In 2013, Yitang Zhang \cite{zhang} published his celebrated paper providing  the first proof of finite gaps between prime numbers. In his work it is shown that
\begin{equation}\label{zhang}
\liminf_{n\to\infty} p_{n+1}-p_{n}\leq 7.10^7.
\end{equation}
His approach was a refinement of the  work of Goldston, Pintz
and Yildirim on the small gaps between consecutive primes \cite{gold} and 
a major ingredient of the proof is a stronger version of the Bombieri-Vinogradov
theorem \cite{zhang}. A nice exposition of the Zhang's proof can be found in \cite{gran}. 

The improvement of the Zhang's numerical bound on the gaps  was obtained right after. For instance, the work of Polymath8 (\cite{polymath8}) and  Maynard (\cite{maynard}) 
presented  a reduction of Zhang's bound to $4680$ and $600$, respectively.
In particular, in Maynard's work the  proofs involved quite different methods 
to Zhang and brought
the upper bound down to 600 using the Bombieri- Vinogradov Theorem (not
Zhang’s stronger alternative) and an improvement on GPY results. We refer to \cite{musson} and references there in for more information about the developments 
of the investigation about gaps of  prime numbers.

 The main objective of this paper is to present a general inequality (Theorem \ref{mainUP}) 
 involving gaps of infinitely many consecutive prime numbers which is our main result.
 In general, our result states  the inequality
 $g_n<p_n u_n$,
 holds for infinitely many (unknown) values of $n$, in which $\{u_n\}$ is a  sequence of positive numbers, given by $$u_n=\frac{q_{n+1}-q_n+1}{q_n},\,\,n\geq 1,$$
 where $\{q_n\}$ is any given sequence of positive numbers. 
 In particular, this result may be seen as an improvement of the Bertrand-Chebyshev theorem which state that for every $\epsilon>0$ the inequality  $g_n<p_n \epsilon$, holds for infinitely many values of $n$.

The reader will notice that the approach adopted to achieve  our main result also deserve a highlight  since it deals with quite elementary methods and it is based on the Kummer's characterization of the convergence of series of positive terms (\cite{tong}). 

   From our main result some interesting consequences are presented, in particular, we prove that the Firoozbakht's conjecture (\cite{ferreira, kourbatov}), that is, 
$$p_{n+1}^{\frac{1}{n+1}}<p_{n}^{\frac{1}{n}},\,\,n \in\mathbb{N},$$
 holds for infinitely many values of $n$. This conjecture  has been verified for all primes below $4.10^8$.    Although we are only getting improvements of basic results about gaps of prime numbers from elementary methods, we also indicate how our results may be related to the remarkable estimates \eqref{GPY}, \eqref{GPY1} and \eqref{zhang} obtained for gaps of prime numbers . Moreover, we also indicate how our main results are related to the twin-prime conjecture.

\section{Background}

In this section we present some basic results and notation that will be needed in order to state and prove our main theorem.  

Let us start with the following classical result. 
\begin{lemma}\label{primes}
The series $\sum \frac{1}{p_n}$ diverges.
\end{lemma}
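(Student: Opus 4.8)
The plan is to give the classical contradiction argument due to Erd\H{o}s, which is entirely elementary and in the spirit of the rest of the paper (an analytic alternative would be to invoke Euler's product formula for $\zeta(s)$ and let $s\to 1^{+}$, but the combinatorial argument is self-contained). Suppose, for contradiction, that $\sum_{n}\frac{1}{p_n}$ converges. Then there is an index $k$ with $\sum_{n>k}\frac{1}{p_n}<\frac{1}{2}$. Call $p_1,\dots,p_k$ the \emph{small} primes and $p_{k+1},p_{k+2},\dots$ the \emph{large} primes, and fix an arbitrary integer $N\geq 1$. Every integer in $\{1,\dots,N\}$ either has all its prime factors among the small primes, or is divisible by at least one large prime; let $N_s$ and $N_\ell$ denote the number of integers in $\{1,\dots,N\}$ of the first and second type, respectively, so that $N=N_s+N_\ell$.

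First I would bound $N_\ell$. The number of multiples of $p_n$ not exceeding $N$ is at most $N/p_n$, so $N_\ell\leq \sum_{n>k}\frac{N}{p_n}<\frac{N}{2}$. Next I would bound $N_s$. Any integer $m\leq N$ all of whose prime factors are small can be written uniquely as $m=s\,t^{2}$ with $s$ squarefree; then $s$ is a product of a subset of $\{p_1,\dots,p_k\}$, which gives at most $2^{k}$ possibilities for $s$, while $t\leq\sqrt{m}\leq\sqrt{N}$ gives at most $\sqrt{N}$ possibilities for $t$. Hence $N_s\leq 2^{k}\sqrt{N}$.

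Combining the two estimates yields $N=N_s+N_\ell<2^{k}\sqrt{N}+\frac{N}{2}$, that is, $\frac{N}{2}<2^{k}\sqrt{N}$, equivalently $\sqrt{N}<2^{k+1}$. Since $N$ was arbitrary, choosing $N\geq 2^{2k+2}$ produces a contradiction. Therefore the assumption fails and $\sum_{n}\frac{1}{p_n}$ diverges. The only point requiring a little care is the squarefree-times-square decomposition and the observation that the resulting constant $2^{k}$ depends on $k$ alone and not on $N$; once this is in place the inequality $\sqrt N<2^{k+1}$ cannot hold for all $N$, and the rest is routine.
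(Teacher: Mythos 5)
Your argument is correct: the small/large split, the bound $N_\ell \le \sum_{n>k} N/p_n < N/2$, the squarefree-times-square decomposition giving $N_s \le 2^{k}\sqrt{N}$, and the final contradiction $\sqrt{N} < 2^{k+1}$ for all $N$ are all valid, and the decomposition $m = st^{2}$ is indeed injective into (subset of small primes) $\times$ (integers $\le \sqrt{N}$), which is the only delicate point. There is, however, nothing in the paper to compare it with: the paper states Lemma \ref{primes} as a classical fact with no proof at all, relying on the reader's knowledge of Euler's theorem. So your proposal supplies more than the paper does, and it does so by the Erd\H{o}s counting argument rather than the route the paper implicitly leans on (Euler's product for $\zeta(s)$ as $s\to 1^{+}$, or Mertens' estimate $\sum_{p\le x}1/p \sim \ln\ln x$). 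The Erd\H{o}s argument has the advantage of being completely elementary and self-contained, in keeping with the paper's stated philosophy; the Euler/Mertens route is shorter if one is willing to quote standard analytic machinery and gives quantitative information (the rate of divergence) that the counting argument does not. Either would be acceptable here; yours is complete as written.
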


The next result is a consequence of the prime number theorem (see \cite[p. 80]{apostol}). 
We will make use of the asymptotic notation $u(x)\sim v(x)$, meaning that
$$\lim_{x\to\infty}\frac{u(x)}{v(x)}=1.$$

\begin{lemma}\label{PNT}
We have the following asymptotic behaviour 
$$p_n\sim n\ln(n).$$
Consequently
$$\ln(p_n)\sim \ln(n\ln(n))\sim\ln(n).$$

\end{lemma}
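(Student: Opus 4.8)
The plan is to deduce Lemma~\ref{PNT} from the prime number theorem in its usual form $\pi(x)\sim x/\ln(x)$, where $\pi(x)$ denotes the number of primes not exceeding $x$. The bridge between the two statements is the trivial identity $\pi(p_n)=n$, which converts the asymptotic for the counting function into one for the $n$th prime. Substituting $x=p_n$ into the prime number theorem gives
\[
n=\pi(p_n)\sim\frac{p_n}{\ln(p_n)},\qquad\text{hence}\qquad p_n\sim n\ln(p_n).
\]

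This is not yet the claimed estimate, since the right-hand side still involves $p_n$; the second step is a bootstrap. Taking logarithms in $p_n\sim n\ln(p_n)$ yields $\ln(p_n)=\ln(n)+\ln(\ln(p_n))+o(1)$. Because $p_n\to\infty$, we have $\ln(\ln(p_n))=o(\ln(p_n))$, so dividing by $\ln(p_n)$ and letting $n\to\infty$ forces $\ln(n)/\ln(p_n)\to 1$, that is, $\ln(p_n)\sim\ln(n)$. Feeding this back into $p_n\sim n\ln(p_n)$ gives $p_n\sim n\ln(n)$, which is the first assertion.

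For the ``consequently'' part I would observe that $\ln(n\ln(n))=\ln(n)+\ln(\ln(n))\sim\ln(n)$, again because $\ln(\ln(n))=o(\ln(n))$; combining this with $\ln(p_n)\sim\ln(n)$ and the transitivity of the relation $\sim$ on eventually positive sequences gives the chain $\ln(p_n)\sim\ln(n\ln(n))\sim\ln(n)$.

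The whole argument is routine once the prime number theorem is granted, so there is no genuine obstacle; the single place that calls for a little care is the bootstrapping step, where one must resist reading off $p_n\sim n\ln(n)$ directly from $p_n\sim n\ln(p_n)$ and instead first establish $\ln(p_n)\sim\ln(n)$. Alternatively, one may simply invoke the worked-out derivation in \cite[p.~80]{apostol}.
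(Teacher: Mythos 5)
Your proof is correct, and it is exactly the standard derivation the paper implicitly relies on: the paper gives no proof of Lemma~\ref{PNT}, merely citing Apostol (p.~80), where the same argument --- substituting $x=p_n$ into $\pi(x)\sim x/\ln(x)$ and bootstrapping via $\ln(p_n)\sim\ln(n)$ --- is carried out. Your care at the bootstrap step (not reading $p_n\sim n\ln(n)$ directly off $p_n\sim n\ln(p_n)$) is precisely the right point to flag.
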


%See, for instance, \cite[p.16]{apostol}, for a proof.

\begin{lemma}\label{rosser}(\cite{rosser})
For all positive integer $n$ we have that
$n\ln(n)<p_n$. Consequently, $\ln(n)< \ln(n\ln(n))<\ln(p_n)$ holds for all $n\geq 3$.
\end{lemma}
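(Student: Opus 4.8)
The plan is to split the statement into its two assertions, which require very different amounts of work. The first, $n\ln(n)<p_n$ for every positive integer $n$, is Rosser's theorem, and I would simply invoke \cite{rosser} rather than reprove it: no self-contained argument is available within the elementary framework of this paper. The point of difficulty is that the bare asymptotic $p_n\sim n\ln(n)$ supplied by Lemma \ref{PNT} does not suffice, since the relation $\sim$ a priori permits $p_n$ to fall below $n\ln(n)$ infinitely often; ruling this out needs the sharper expansion $p_n = n\ln(n)+n\ln(\ln(n))-n+o(n)$, whose correction term $n(\ln(\ln(n))-1)$ is eventually positive, or equivalently an effective upper bound for the prime counting function of Rosser--Schoenfeld type, e.g. $\pi(x)<x/(\ln(x)-1)$ for $x$ beyond an explicit threshold, together with a finite numerical verification for the remaining small $n$. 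Each of these is classical but not elementary, so importing Rosser's inequality is the honest route.

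Granting the first assertion, the ``consequently'' clause is a short exercise in the monotonicity of the logarithm, and this is the part I would actually write out. Fix $n\geq 3$. Since $3>e$ we have $\ln(n)>1$, hence $n\ln(n)>n$; applying the strictly increasing map $t\mapsto\ln(t)$ gives $\ln(n\ln(n))>\ln(n)$. On the other hand the first assertion gives $p_n>n\ln(n)>0$, and applying $\ln$ once more yields $\ln(p_n)>\ln(n\ln(n))$. Concatenating the two inequalities produces $\ln(n)<\ln(n\ln(n))<\ln(p_n)$, as required. The hypothesis $n\geq 3$ is used only to force $\ln(n)>1$: for $n=1,2$ the middle quantity is either undefined or fails to dominate $\ln(n)$, which explains the stated range.

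Thus the main obstacle is not really a proof obstacle internal to the paper at all; it is that $n\ln(n)<p_n$ is a nontrivial analytic input borrowed from \cite{rosser}. Once that inequality is in hand, nothing beyond the trivial logarithmic manipulation above is needed, so the proof I would supply amounts to the citation together with the second paragraph.
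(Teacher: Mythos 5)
Your proposal is correct and matches the paper's treatment: the paper does not prove $n\ln(n)<p_n$ either, but simply imports it from \cite{rosser}, and the ``consequently'' clause is exactly the trivial monotonicity-of-$\ln$ argument you wrote out (with $n\geq 3$ ensuring $\ln(n)>1$). Nothing further is needed.
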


As will be presented, the key ingredient for our main theorem is a consequence 
of the Kummer's test for convergence of series of positive terms (\cite{ tong}).
The statement of this result is presented bellow and the main feature of if is that it characterizes the sequences of positive terms that are summable in the sense that it provides necessary and sufficient conditions for a positive sequence to be summable. For the convenience of the reader we include the proof of this important result.

\begin{lemma}\label{kummer1}
A sequence $\{a_n\}$  of positive real numbers is summable if and only if there exists a sequence  of positive numbers $\{b_n\}$, a real number   $c>0$ and a positive integer $n_0$ such that, for all $n>n_0$,  the inequality
$$b_n \frac{a_{n}}{a_{n+1}}-b_{n+1}\geq  c$$
holds .
\end{lemma}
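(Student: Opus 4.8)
The plan is to prove the two implications separately, the sufficiency by a telescoping estimate on partial sums and the necessity by exhibiting an explicit witness sequence built from the tails of the series.

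For the sufficiency direction, I would assume that positive sequences $\{a_n\}$, $\{b_n\}$, a constant $c>0$ and an index $n_0$ are given with $b_n a_n/a_{n+1}-b_{n+1}\geq c$ for all $n>n_0$. Multiplying through by $a_{n+1}>0$ gives $b_n a_n-b_{n+1}a_{n+1}\geq c\,a_{n+1}$ for every $n>n_0$. Summing this inequality from $n=n_0+1$ to $n=N$, the left-hand side telescopes to $b_{n_0+1}a_{n_0+1}-b_{N+1}a_{N+1}$, so that
$$c\sum_{n=n_0+2}^{N+1}a_n\leq b_{n_0+1}a_{n_0+1}-b_{N+1}a_{N+1}\leq b_{n_0+1}a_{n_0+1},$$
using $b_{N+1}a_{N+1}>0$. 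Hence the partial sums of $\sum a_n$ are bounded above by $b_{n_0+1}a_{n_0+1}/c$ plus the finite initial segment, and since the terms are positive the series converges.

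For the necessity direction, I would suppose $\sum a_n$ converges and set $r_n:=\sum_{k=n+1}^{\infty}a_k$, the tail of the series, which is a well-defined positive real number for every $n$ because all terms are positive. Define $b_n:=r_n/a_n>0$. Then, using $r_n-r_{n+1}=a_{n+1}$, a direct computation gives
$$b_n\frac{a_n}{a_{n+1}}-b_{n+1}=\frac{r_n}{a_{n+1}}-\frac{r_{n+1}}{a_{n+1}}=\frac{r_n-r_{n+1}}{a_{n+1}}=1,$$
so the required inequality holds with $c=1$ for all $n\geq 1$ (so one may take $n_0=0$). This completes both directions.

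The routine parts are the telescoping and the tail identity; the only point that requires a genuine idea is guessing the witness sequence in the necessity direction, and the main obstacle is recognizing that the normalized tail $b_n=r_n/a_n$ is the correct choice (an un-normalized tail, or the tail $\sum_{k\geq n}a_k$, does not collapse the expression to a constant). Once that choice is made, everything reduces to the elementary manipulations above, and no appeal to anything beyond convergence of a bounded monotone sequence of partial sums is needed.
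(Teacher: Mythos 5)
Your proof is correct and follows essentially the same route as the paper: the same telescoping of $b_na_n-b_{n+1}a_{n+1}\geq c\,a_{n+1}$ to bound the partial sums for sufficiency, and the same witness $b_n=r_n/a_n$ (the paper writes it as $(M-\sum_{j=1}^n a_j)/a_n$, which is identical) yielding the constant value $1$ for necessity. Nothing further is needed.
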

\begin{proof}
Suppose that 
there exists a sequence  of positive numbers $\{b_n\}$, a real number   $c>0$ 
and a positive integer $n_0$ such that, for all $n>n_0$,  the inequality
$$b_n \frac{a_{n}}{a_{n+1}}-b_{n+1}\geq  c,$$
holds. This implies that
$$b_n a_{n}-b_{n+1}a_{n+1}\geq  c a_{n+1},\,\,n\geq n_0.$$
For each $k\geq 0$, 
this last inequality implies that
$$\sum_{n=n_0}^{n_0 +k}b_n a_{n}-b_{n+1}a_{n+1}\geq  c \sum_{n=n_0}^{n_0 +k} a_{n+1},$$
that is,
$$b_{n_0} a_{n_0}-b_{n_{0}+k+1}a_{n_{0}+k+1}\geq  c \sum_{n=n_0}^{n_0 +k} a_{n+1}.$$
Hence, 
$$b_{n_0} a_{n_0}\geq  c \sum_{n=n_0}^{n_0 +k} a_{n+1},$$
for all $k\geq 0$. This implies the convergence of the  series $\sum a_{n}$.

 Conversely, if $\sum a_n = M$ 
define 
$$q_n=\frac{M-\sum_{j=1}^{n}a_j}{a_n},\,\, n\geq 1.$$ 
For this sequence we have that
$$q_{n}\frac{a_{n}}{a_{n+1}}-q_{n+1}=\frac{M-\sum_{j=1}^{n}a_j}{a_{n+1}}-\frac{M-\sum_{j=1}^{n+1}a_j}{a_{n+1}}=1,$$
for all $n\geq 1$. The proof is concluded.

\end{proof}

Equivalently, Lemma \ref{kummer} may be rewritten as follows.
\begin{lemma}\label{kummer}
A sequence $\{a_n\}$  of positive real numbers is summable if and only if there exists a sequence  of positive numbers $\{b_n\}$ and a positive integer $n_0$ such that, for all $n>n_0$,  the inequality
$$b_n \frac{a_{n}}{a_{n+1}}-b_{n+1}\geq  1$$
holds .
\end{lemma}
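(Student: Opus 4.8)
The plan is to obtain this reformulation directly from Lemma \ref{kummer1} by a trivial rescaling, so essentially nothing new has to be proved. First I would handle the easy implication: if there is a sequence of positive numbers $\{b_n\}$ and an integer $n_0$ with $b_n \frac{a_n}{a_{n+1}} - b_{n+1} \geq 1$ for all $n > n_0$, then this is precisely the hypothesis of Lemma \ref{kummer1} with the particular choice $c = 1 > 0$, and hence $\{a_n\}$ is summable. No computation is needed here.

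For the converse, suppose $\{a_n\}$ is summable. By Lemma \ref{kummer1} there exist a sequence of positive numbers $\{b_n\}$, a constant $c > 0$, and an integer $n_0$ such that $b_n \frac{a_n}{a_{n+1}} - b_{n+1} \geq c$ for all $n > n_0$. I would then set $b_n' = b_n / c$, which is again a sequence of positive numbers, and observe that
$$b_n' \frac{a_n}{a_{n+1}} - b_{n+1}' = \frac{1}{c}\left(b_n \frac{a_n}{a_{n+1}} - b_{n+1}\right) \geq \frac{c}{c} = 1$$
for all $n > n_0$, which is the desired inequality. Alternatively, one could bypass the rescaling entirely and reuse the explicit witness from the proof of Lemma \ref{kummer1}: writing $M = \sum a_n$ and $q_n = (M - \sum_{j=1}^{n} a_j)/a_n$, the same computation shown there gives $q_n \frac{a_n}{a_{n+1}} - q_{n+1} = 1$ for every $n \geq 1$, so $\{b_n\} = \{q_n\}$ works with $n_0 = 0$.

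There is no real obstacle in this lemma; its entire content is already contained in Lemma \ref{kummer1}, and the point of stating it separately is purely for convenience in the sequel — the normalized form with the constant $1$ is exactly what will be contradicted when it is combined with the divergence of $\sum 1/p_n$ (Lemma \ref{primes}) to force the prime-gap inequality of Theorem \ref{mainUP}.
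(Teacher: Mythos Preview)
Your proposal is correct and matches the paper's treatment: the paper does not give a separate proof of Lemma~\ref{kummer} at all, merely introducing it with ``Equivalently, Lemma~\ref{kummer} may be rewritten as follows,'' since the converse direction in the proof of Lemma~\ref{kummer1} already produces a sequence $\{q_n\}$ with constant exactly equal to $1$. Your rescaling argument and your alternative of reusing that explicit witness are both valid ways to spell out this triviality.
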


As a consequence of Lemma \ref{kummer} we have the following lemma.

\begin{lemma}\label{lem}

Suppose that $\{a_n\}$ is a sequence of positive real numbers such that $\sum \frac{1}{a_n}$ diverges. 
Then, for every sequence of positive numbers $\{b_n\}$ and every $n_0\in\mathbb{N}$, there exists $n'>n_0$ such that
$$b_{n'} a_{n'+1}-b_{n'+1} a_{n'}< a_{n'}.$$
\end{lemma}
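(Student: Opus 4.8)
The plan is to argue by contradiction, using Lemma \ref{kummer} applied to the sequence $c_n := 1/a_n$. Suppose the conclusion fails: there is a sequence of positive numbers $\{b_n\}$ and an index $n_0$ such that for all $n' > n_0$ one has
$$b_{n'} a_{n'+1} - b_{n'+1} a_{n'} \geq a_{n'}.$$
Dividing this inequality through by $a_{n'} a_{n'+1}$ (both positive), the left-hand side becomes $b_{n'}/a_{n'} - b_{n'+1}/a_{n'+1}$, which in terms of $c_n = 1/a_n$ reads $b_{n'} c_{n'} - b_{n'+1} c_{n'+1}$, while the right-hand side becomes $1/a_{n'+1} = c_{n'+1}$. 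So we would have $b_{n'} c_{n'} - b_{n'+1} c_{n'+1} \geq c_{n'+1}$ for all $n' > n_0$.

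Now I would massage this into the exact form required by Lemma \ref{kummer}. The inequality $b_{n'} c_{n'} - b_{n'+1} c_{n'+1} \geq c_{n'+1}$ can be rewritten, after dividing by $c_{n'+1}$ (positive), as
$$b_{n'} \frac{c_{n'}}{c_{n'+1}} - b_{n'+1} \geq 1, \qquad n' > n_0.$$
This is precisely the Kummer-type criterion of Lemma \ref{kummer} with the roles played by the sequence $\{c_n\}$ and the auxiliary sequence $\{b_n\}$. Hence Lemma \ref{kummer} forces $\sum c_n = \sum \frac{1}{a_n}$ to converge, contradicting the hypothesis that $\sum \frac{1}{a_n}$ diverges. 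Therefore the assumed inequality cannot hold for all $n' > n_0$, i.e. there exists $n' > n_0$ with $b_{n'} a_{n'+1} - b_{n'+1} a_{n'} < a_{n'}$, which is the claim.

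The only point requiring a little care — and the step I would double-check — is the division by $a_{n'} a_{n'+1}$ and the bookkeeping that turns $b_{n'}a_{n'+1} - b_{n'+1}a_{n'} \geq a_{n'}$ into $b_{n'} c_{n'}/c_{n'+1} - b_{n'+1} \geq 1$; since all quantities are strictly positive this is a valid equivalence, but it is worth writing out explicitly so that the direction of the inequality is preserved at every stage. No genuine obstacle arises: the statement is essentially the contrapositive of Lemma \ref{kummer} after a change of variables $a_n \leftrightarrow 1/a_n$.
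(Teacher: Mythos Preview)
Your proof is correct and follows essentially the same route as the paper's own argument: assume the inequality fails for all $n'>n_0$, divide through by the positive quantity $a_{n'}$ (or equivalently by $a_{n'}a_{n'+1}$ and then $c_{n'+1}$, as you do) to obtain the Kummer-type inequality $b_{n'}\frac{1/a_{n'}}{1/a_{n'+1}}-b_{n'+1}\geq 1$, and invoke Lemma~\ref{kummer} to contradict the divergence of $\sum 1/a_n$. The only difference is cosmetic---you introduce the auxiliary notation $c_n=1/a_n$ and split the division into two steps, whereas the paper divides by $a_n$ in one stroke.
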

\begin{proof}
Suppose that there exists a sequence $\{b_n\}$ of positive real numbers and a 
$n_0\in\mathbb{N}$ such that
$$b_n a_{n+1}-b_{n+1} a_{n}\geq  a_{n}$$
for all $n>n_0$. 
This implies that 
$$b_n \frac{1/a_{n}}{1/a_{n+1}}-b_{n+1} \geq  1$$
for all $n>n_0$, and therefore, by Lemma \ref{kummer}, 
$\sum \frac{1}{a_n}$ would be convergent, but  this is a contradiction.

\end{proof}

\section{Main result and consequences}

The main result of this paper is as follows. The reader will note that it is a direct consequence of Lemma \ref{lem}. 

\begin{theorem}\label{mainUP}
For every  sequence $\{q_n\}$ of positive real numbers 
% there exists a subsequence $\{q_{n'}\}$
%for which
the inequality
$$q_{n} p_{n+1}-q_{n+1} p_{n}<p_{n},$$
holds for infinitely many values of $n$, sufficiently large.

\end{theorem}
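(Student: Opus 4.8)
The plan is to apply Lemma \ref{lem} directly with the specific choice $a_n = p_n$, the $n$th prime. The crucial observation is that Lemma \ref{primes} guarantees that $\sum \frac{1}{p_n}$ diverges, which is exactly the hypothesis required to invoke Lemma \ref{lem} with this choice of $\{a_n\}$. Since the primes are positive real numbers, the sequence $\{p_n\}$ satisfies all the hypotheses of Lemma \ref{lem}.

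So the argument proceeds as follows. Let $\{q_n\}$ be an arbitrary sequence of positive real numbers, and let $n_0 \in \mathbb{N}$ be arbitrary. Applying Lemma \ref{lem} with $a_n = p_n$ and $b_n = q_n$, we obtain some $n' > n_0$ such that
$$q_{n'} p_{n'+1} - q_{n'+1} p_{n'} < p_{n'}.$$
Since $n_0$ was arbitrary, this inequality must hold for infinitely many values of $n$: if it held for only finitely many, we could take $n_0$ larger than all of them and reach a contradiction. Hence the claimed inequality holds for infinitely many $n$, and these can be taken arbitrarily large.

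I do not anticipate any real obstacle here, since the theorem is essentially a restatement of Lemma \ref{lem} specialized to the prime sequence — the entire content has already been packaged into the lemmas. The one point deserving a sentence of care is the logical step from ``for every $n_0$ there exists $n' > n_0$ with the property'' to ``the property holds for infinitely many $n$''; this is the standard contrapositive argument sketched above, and it also justifies the phrase ``sufficiently large'' in the statement. It may also be worth remarking explicitly that the roles are matched as $a_n \leftrightarrow p_n$ and $b_n \leftrightarrow q_n$, so that the conclusion $b_{n'} a_{n'+1} - b_{n'+1} a_{n'} < a_{n'}$ of Lemma \ref{lem} becomes precisely the displayed inequality of the theorem.
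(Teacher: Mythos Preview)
Your proof is correct and follows exactly the paper's approach: apply Lemma~\ref{lem} with $a_n = p_n$, using Lemma~\ref{primes} to supply the divergence of $\sum 1/p_n$. If anything, you have been more explicit than the paper in spelling out the passage from ``for every $n_0$ there exists $n' > n_0$'' to ``infinitely many $n$,'' which the paper leaves implicit.
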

\begin{proof}
If we take $a_n=p_n$ in Lemma \ref{lem},  since $\sum  \frac{1}{p_n} $ diverges, the proof follows the same lines the proof of Lemma \ref{lem}. 
\end{proof}

In what follows  $g_n=p_{n+1}-p_{n}$ will denote the $n$th gap between 
the consecutive primes $p_{n+1}$ and $p_n$.

From the previous Theorem \ref{mainUP} it is clear that, 
\begin{corollary}\label{coro}
For every sequence
$\{q_n\}$ of positive numbers, the inequalities 
%$\{q_n'\}$ for which  
$$(i)\,\,\, g_{n}<p_{n}\left(\frac{q_{n+1}-q_{n}+1}{q_{n}}\right),$$
and
$$ (ii)\,\,\,\frac{p_{n+1}}{p_{n}}<\frac{q_{n+1}+1}{q_{n}},$$
holds for infinitely many values of $n$.
\end{corollary}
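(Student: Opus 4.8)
The plan is to derive Corollary \ref{coro} directly from Theorem \ref{mainUP} by purely algebraic manipulation, with no further number-theoretic input. The starting point is the inequality guaranteed by Theorem \ref{mainUP}, namely
$$q_n p_{n+1} - q_{n+1} p_n < p_n,$$
which holds for infinitely many $n$. For part $(i)$, I would first add $q_n p_n$ to both sides to obtain $q_n p_{n+1} - q_{n+1} p_n + q_n p_n < p_n + q_n p_n$, then regroup the left-hand side as $q_n(p_{n+1} - p_n) + (q_n - q_{n+1}) p_n$, i.e. $q_n g_n - (q_{n+1} - q_n) p_n$. This yields $q_n g_n < p_n + q_n p_n + (q_{n+1} - q_n)p_n = p_n\bigl(1 + q_n + q_{n+1} - q_n\bigr) $; wait — more cleanly, from $q_n p_{n+1} - q_{n+1}p_n < p_n$ write $q_n p_{n+1} < p_n(1 + q_{n+1})$, so $q_n(g_n + p_n) < p_n(1 + q_{n+1})$, hence $q_n g_n < p_n(1 + q_{n+1} - q_n)$, and dividing by $q_n > 0$ gives exactly $(i)$.

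For part $(ii)$, I would again start from $q_n p_{n+1} < p_n(q_{n+1} + 1)$, which is just the rearrangement $q_n p_{n+1} - q_{n+1} p_n < p_n$ with $q_{n+1}p_n$ moved to the right. Dividing both sides by $q_n p_n > 0$ immediately yields $\frac{p_{n+1}}{p_n} < \frac{q_{n+1} + 1}{q_n}$, which is $(ii)$. Both inequalities are enforced for the same infinite set of indices $n$ produced by Theorem \ref{mainUP}, so nothing is lost.

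There is essentially no obstacle here: the content is entirely in Theorem \ref{mainUP}, and the Corollary is a two-line consequence. The only point requiring a word of care is the direction of the inequalities when dividing — but since every $q_n$, $q_{n+1}$, $p_n$ is strictly positive, all divisions preserve the sense of the inequality, so this is routine. I would therefore present the proof as the short chain of rearrangements above, remarking that $(i)$ and $(ii)$ are simply two equivalent forms of the conclusion of Theorem \ref{mainUP} after isolating $g_n$ and $p_{n+1}/p_n$ respectively.

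\begin{proof}
Both inequalities follow from Theorem \ref{mainUP} by elementary manipulation. Let $n$ be any index for which $q_n p_{n+1} - q_{n+1} p_n < p_n$, of which there are infinitely many. Rewriting this as $q_n p_{n+1} < p_n(q_{n+1} + 1)$ and dividing by $q_n p_n > 0$ gives $(ii)$. For $(i)$, using $p_{n+1} = p_n + g_n$ in $q_n p_{n+1} < p_n(q_{n+1} + 1)$ yields $q_n(p_n + g_n) < p_n(q_{n+1} + 1)$, hence $q_n g_n < p_n(q_{n+1} - q_n + 1)$, and dividing by $q_n > 0$ gives $(i)$.
\end{proof}
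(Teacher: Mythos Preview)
Your proof is correct and takes exactly the same approach as the paper, which simply asserts that the corollary is clear from Theorem \ref{mainUP}; you have merely written out the algebraic rearrangements explicitly. There is nothing to add.
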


For suitable choices of $\{q_n\}$ in Theorem \ref{mainUP} some interesting consequences are obtained, as it is shown below. 

\begin{corollary}
For infinitely many values of $n$ the inequality 
$$\frac{ g_{n}}{p_{n}}<\frac{2}{n},$$
holds. In particular
$$\liminf_{n\to\infty}\frac{g_{n} }{p_{n}}=0$$
and also
$$\liminf_{n\to\infty}\frac{n}{2p_{n}}g_{n}\leq 1.$$
\end{corollary}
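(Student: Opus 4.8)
The plan is to apply Corollary \ref{coro}(i) with a cleverly chosen sequence $\{q_n\}$ so that the quantity $u_n=\frac{q_{n+1}-q_n+1}{q_n}$ becomes exactly $\frac{2}{n}$ (or something bounded by it). First I would try the simplest linear choice $q_n=n$, which is a sequence of positive numbers, so Corollary \ref{coro}(i) applies. With this choice $q_{n+1}-q_n=1$, hence $q_{n+1}-q_n+1=2$ and $u_n=\frac{2}{n}$, giving immediately that $\frac{g_n}{p_n}<\frac{2}{n}$ holds for infinitely many $n$. That is the whole content of the first displayed inequality; the main (and only slightly delicate) point is just to verify that this substitution is legitimate, i.e. that $\{n\}$ is an admissible sequence of positive reals, which it plainly is.

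Next I would deduce the two $\liminf$ statements. Since $\frac{g_n}{p_n}<\frac{2}{n}$ for infinitely many $n$, and $\frac{2}{n}\to 0$, along that infinite subsequence $\frac{g_n}{p_n}\to 0$; combined with $\frac{g_n}{p_n}>0$ for all $n$, this forces $\liminf_{n\to\infty}\frac{g_n}{p_n}=0$. For the last inequality, rewrite $\frac{g_n}{p_n}<\frac{2}{n}$ as $\frac{n}{2p_n}g_n<1$ for those infinitely many $n$; taking the limit inferior over all $n$ (which is at most the limit inferior over the subsequence) yields $\liminf_{n\to\infty}\frac{n}{2p_n}g_n\leq 1$. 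Strictly speaking one only gets $\le 1$ rather than $<1$ because the inequality is strict on a subsequence but the $\liminf$ could still equal the supremum of those values; the non-strict bound is exactly what is claimed.

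I expect essentially no obstacle here: the corollary is a one-line specialization of Theorem \ref{mainUP} via Corollary \ref{coro}, and the rest is elementary manipulation of limits inferior along a subsequence. The only thing worth stating carefully is the logical step "inequality on an infinite set of indices $\Rightarrow$ bound on the $\liminf$ over all indices," and the observation that the terms are positive so that the $\liminf$ is pinned at $0$ rather than possibly $-\infty$. No appeal to Lemma \ref{PNT} or Lemma \ref{rosser} is needed for this particular corollary, though they would be relevant for comparing $\frac{2}{n}$ with $\frac{1}{\ln p_n}$ in the spirit of \eqref{GPY}.
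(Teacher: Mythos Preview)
Your proposal is correct and matches the paper's own proof exactly: the paper simply writes ``Define $q_n=n$ in Corollary \ref{coro}'' and leaves the two $\liminf$ consequences to the reader. Your additional justification of those $\liminf$ statements is sound and more detailed than what the paper provides.
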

\begin{proof}
Define $q_n=n$ in Corollary \ref{coro}.
\end{proof}

If $\{q_n\}$ is a sequence of positive numbers, let us write 
\begin{equation}\label{Qn}
Q_n:=p_n\frac{ q_{n+1}-q_{n}+1}{q_n},\,\,n\geq 1.
\end{equation}
We will also write $\Xi(\{q_n\})$ to denote the subset of $\mathbb{N}$,  of indexes $n$  for which 
$g_n<Q_{n}$, as indicated in Corollary \ref{coro}-$(i)$.

The following  result is  related to the remarkable estimates \eqref{GPY} and \eqref{GPY1},  obtained in \cite{gold} and \cite{gold1}, respectively. These estimates  were obtained through technical and deep results from analytic number theory. As we indicate, for suitable choices of $\{q_n\}$ in Corollary \ref{coro} it may be possible to obtain such estimates 
however, the effort now is directionated to obtain the sequence $\{q_n\}$ and the set $\Xi(\{q_n\})$.

%%%%%%%%%%%%%%%%%%%%%%%%%%%%%%%%%%%%%%%%%%%%%%%%%%%%%%%%%%%%%%%%%%%%%%%%%%%%%%%%%%
\begin{corollary}
$(i)$ 
If there exists a sequence $\{q_n\}$ of positive numbers such that 
$g_n<Q_n$ for $n\in\Xi(\{q_n\})$ and 
$$\lim_{n\in \Xi(\{q_n\})} n \,\frac{ q_{n+1}-q_{n}+1}{q_n}=0$$
then
$$\liminf_{n\to\infty}\frac{g_{n} }{\ln(p_n)}=0.$$

$(ii)$ If there exists a sequence $\{q_n\}$ of positive numbers such that 
$g_n<Q_n$ for $n\in\Xi(\{q_n\})$ and 
$$\lim_{n\in \Xi(\{q_n\})} \frac{n\ln(n)^{1/2}}{\ln(\ln(n))^{2}} \,\frac{ q_{n+1}-q_{n}+1}{q_n}< \infty$$
 then 
$$\liminf_{n\to\infty}\frac{g_{n} }{\ln(p_{n})^{1/2}\ln(\ln(p_n))^2}<\infty .$$
\end{corollary}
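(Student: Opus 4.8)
The plan is to derive both parts from Corollary \ref{coro}-$(i)$ together with the prime number theorem asymptotics recorded in Lemma \ref{PNT}. The starting point is the inequality $g_n < Q_n = p_n\,\frac{q_{n+1}-q_n+1}{q_n}$, which by hypothesis holds for all $n$ in the infinite set $\Xi(\{q_n\})$. Dividing by the relevant normalizing quantity and using $p_n \sim n\ln(n)$ will convert the hypothesis on $n\,\frac{q_{n+1}-q_n+1}{q_n}$ into a statement about $g_n$ divided by a power of $\ln(p_n)$.

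For part $(i)$, I would argue as follows. For $n \in \Xi(\{q_n\})$ write
$$\frac{g_n}{\ln(p_n)} < \frac{p_n}{\ln(p_n)}\cdot\frac{q_{n+1}-q_n+1}{q_n}.$$
By Lemma \ref{PNT} we have $p_n \sim n\ln(n)$ and $\ln(p_n)\sim \ln(n)$, so $\frac{p_n}{\ln(p_n)} \sim \frac{n\ln(n)}{\ln(n)} = n$. Hence the right-hand side is asymptotically $n\,\frac{q_{n+1}-q_n+1}{q_n}$, which tends to $0$ along $\Xi(\{q_n\})$ by hypothesis. Since $\Xi(\{q_n\})$ is infinite and $g_n/\ln(p_n) \geq 0$, this forces $\liminf_{n\to\infty} \frac{g_n}{\ln(p_n)} = 0$. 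A small technical point to handle carefully is that the asymptotic equivalence $\frac{p_n}{\ln(p_n)}\sim n$ only controls the ratio up to a factor tending to $1$, so I would phrase it as: there is $N$ such that $\frac{p_n}{\ln(p_n)} \le 2n$ for $n\ge N$, whence $\frac{g_n}{\ln(p_n)} \le 2n\,\frac{q_{n+1}-q_n+1}{q_n}$ for $n \in \Xi(\{q_n\})$ with $n\ge N$, and the right side still tends to $0$.

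For part $(ii)$ the same mechanism applies with $\ln(p_n)^{1/2}\ln(\ln(p_n))^2$ in the denominator. For $n \in \Xi(\{q_n\})$,
$$\frac{g_n}{\ln(p_n)^{1/2}\ln(\ln(p_n))^2} < \frac{p_n}{\ln(p_n)^{1/2}\ln(\ln(p_n))^2}\cdot\frac{q_{n+1}-q_n+1}{q_n},$$
and using $p_n\sim n\ln(n)$, $\ln(p_n)\sim\ln(n)$, $\ln(\ln(p_n))\sim\ln(\ln(n))$ from Lemma \ref{PNT}, the prefactor $\frac{p_n}{\ln(p_n)^{1/2}\ln(\ln(p_n))^2}$ is asymptotically $\frac{n\ln(n)}{\ln(n)^{1/2}\ln(\ln(n))^2} = \frac{n\ln(n)^{1/2}}{\ln(\ln(n))^2}$. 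By hypothesis this times $\frac{q_{n+1}-q_n+1}{q_n}$ stays bounded along $\Xi(\{q_n\})$, so along this infinite set the sequence $\frac{g_n}{\ln(p_n)^{1/2}\ln(\ln(p_n))^2}$ is bounded, giving $\liminf_{n\to\infty}\frac{g_n}{\ln(p_n)^{1/2}\ln(\ln(p_n))^2} < \infty$.

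The only real subtlety, and the step I would be most careful about, is the bookkeeping of asymptotic equivalences: one must pass from $\sim$ statements (which are multiplicative up to $1+o(1)$) to honest upper bounds valid for all large $n$ before multiplying by $\frac{q_{n+1}-q_n+1}{q_n}$ and taking limits along $\Xi(\{q_n\})$. Concretely I would fix a constant (say $2$) and a threshold $N$ so that the PNT-derived prefactor is at most twice its asymptotic value for $n \ge N$, then intersect $\Xi(\{q_n\})$ with $\{n \ge N\}$ — still infinite — and take the limit or limsup there. Everything else is a one-line substitution of Lemma \ref{PNT} into Corollary \ref{coro}-$(i)$.
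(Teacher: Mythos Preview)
Your proposal is correct and follows essentially the same route as the paper: divide the inequality $g_n<Q_n$ from Corollary~\ref{coro}-$(i)$ by the appropriate normalizer, then use the prime number theorem asymptotics of Lemma~\ref{PNT} to identify the resulting prefactor with $n$ (resp.\ $n\ln(n)^{1/2}/\ln(\ln(n))^{2}$) and pass to the limit along $\Xi(\{q_n\})$. If anything, your treatment is slightly more careful than the paper's, since you explicitly convert the $\sim$ relations into honest upper bounds (with a constant factor such as $2$) before taking the limit, whereas the paper leaves that step implicit.
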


%%%%%%%%%%%%%%%%%%%%%%%%%%%%%%%%%%%%%%%%%%%%%%%%%%%%%%%%%%%%%%%%%%%%%%%%%%%%%%%%%%

\begin{proof}
%For $(i)$,  
From Corollary \ref{coro}-$(i)$  it follows that 
for such  $\{q_n\}$ the inequality
$$\frac{g_{n}}{\ln(p_n)}<\frac{p_n}{n\ln(n)}\frac{\ln(n)}{\ln(p_n)}n\frac{q_{n+1}-q_n +1}{q_n}$$
holds for all the infinitely many values $n$ in $\Xi(\{q_n\})$. Thus, if 
$$\lim_{n\in\Xi(\{q_n\})} n \,\frac{ q_{n+1}-q_{n}+1}{q_n}=0$$
an application of Lemma \ref{PNT} in the last inequality implies that
$$\liminf_{n\to\infty} \frac{g_n}{\ln(p_n)}=0.$$

For $(ii)$, the same idea applies since, from Corollary \ref{coro}, for every $\{q_n\}$  
$$\frac{g_{n}}{\ln(p_n)^{1/2}\ln(\ln(p_n))^{2}}<\frac{n\ln(p_n)^{1/2}}{\ln(\ln(p_n))^2}\frac{q_{n+1}-q_n +1}{q_n},$$
holds for all infinitely many values $n\in\Xi(\{q_n\})$.

\end{proof}

\noindent{\it Remark.} It is important to note that for any sequence $\{q_n\}$ of positive numbers, we have that $n Q_n>1$ for infinitely many values of $n$. This may been seen as a consequence of Lemma \ref{lem} with $a_n=n$. That is, since $\sum \frac{1}{n}$ diverges, we conclude that, for every sequence $\{q_n\}$ of positive terms the inequality 
$$1<n\frac{q_{n+1}-q_n+1}{q_n},$$
holds for infinitely many values of  $n$. 

Note also  that from our results we are able to extract the following result related to \eqref{GPY1} via much more elementary arguments.

\begin{corollary}
For every $\epsilon>0$
$$\liminf_{n\to\infty}\frac{g_{n} }{\ln(p_n)^{1+\epsilon}}=0.$$
\end{corollary}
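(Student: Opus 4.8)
The plan is to deduce this corollary from Corollary \ref{coro}-$(i)$ by exhibiting, for each fixed $\epsilon>0$, a concrete sequence $\{q_n\}$ of positive numbers for which the resulting bound $Q_n/\ln(p_n)$ tends to $0$ along the (infinite) set of indices guaranteed by the corollary. The natural candidate, suggested by the pattern of the preceding corollaries, is a mild polynomial sequence such as $q_n = n\,\ln(n)^{1+\epsilon}$ (or more simply $q_n = n^{1+\delta}$ for a suitable $\delta$ depending on $\epsilon$); one wants the ``discrete logarithmic derivative'' $(q_{n+1}-q_n+1)/q_n$ to behave like a constant times $1/(n\ln(n)^{1+\epsilon})$ up to lower-order factors.

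First I would fix $\epsilon>0$ and set $q_n=n\ln(n)^{1+\epsilon}$ for $n\geq 2$ (adjusting the first term to keep positivity). Next I would estimate the increment: by the mean value theorem applied to $t\mapsto t\ln(t)^{1+\epsilon}$, one gets $q_{n+1}-q_n = \ln(n)^{1+\epsilon} + (1+\epsilon)\ln(n)^{\epsilon} + o(\ln(n)^{\epsilon})$, so
$$\frac{q_{n+1}-q_n+1}{q_n}=\frac{1}{n}+O\!\left(\frac{1}{n\ln(n)}\right).$$
Then I would plug this into $Q_n=p_n\,(q_{n+1}-q_n+1)/q_n$ and divide by $\ln(p_n)$, using Lemma \ref{PNT} in the form $p_n\sim n\ln(n)$ and $\ln(p_n)\sim\ln(n)$:
$$\frac{Q_n}{\ln(p_n)}\sim \frac{n\ln(n)}{\ln(n)}\cdot\frac{1}{n}\cdot\frac{1}{\ln(n)^{\epsilon}}\longrightarrow 0\quad(n\to\infty).$$
Hence $Q_n/\ln(p_n)\to 0$ as $n\to\infty$, in particular along the infinite set $\Xi(\{q_n\})$. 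Since $g_n<Q_n$ for all $n\in\Xi(\{q_n\})$ by Corollary \ref{coro}-$(i)$, we conclude $\liminf_{n\to\infty} g_n/\ln(p_n)^{1+\epsilon}=0$ — wait, I must be careful with the exponent: the statement has $\ln(p_n)^{1+\epsilon}$ in the denominator, so in fact it suffices to choose the \emph{simpler} sequence $q_n=n$, for which $(q_{n+1}-q_n+1)/q_n=2/n$ and $Q_n/\ln(p_n)^{1+\epsilon}\sim (n\ln(n)/n)/\ln(n)^{1+\epsilon}=\ln(n)^{-\epsilon}\to 0$. So the cleanest route is: take $q_n=n$, invoke the first corollary ($g_n<2p_n/n$ infinitely often), divide by $\ln(p_n)^{1+\epsilon}$, and apply Lemma \ref{PNT}.

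The only real obstacle is bookkeeping: one must make sure that the asymptotic $p_n\sim n\ln(n)$ is applied correctly and that the $\liminf$ is taken over \emph{all} $n$ while the inequality $g_n<Q_n$ holds only on the infinite subset $\Xi(\{q_n\})\subseteq\mathbb{N}$ — but since a sequence of nonnegative terms has $\liminf$ zero as soon as it has a subsequence tending to zero, and $0\le g_n/\ln(p_n)^{1+\epsilon}<Q_n/\ln(p_n)^{1+\epsilon}\to 0$ along that subsequence, this causes no difficulty. No new ingredient beyond Corollary \ref{coro} and Lemma \ref{PNT} is needed.
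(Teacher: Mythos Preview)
Your final approach---take $q_n=n$, use Corollary \ref{coro}-(i) to obtain $g_n<2p_n/n$ for infinitely many $n$, divide by $\ln(p_n)^{1+\epsilon}$, and apply Lemma \ref{PNT}---is exactly the paper's proof. (Your initial detour via $q_n=n\ln(n)^{1+\epsilon}$ has a slip: the stray factor $1/\ln(n)^{\epsilon}$ in your displayed asymptotic for $Q_n/\ln(p_n)$ does not arise, since $(q_{n+1}-q_n+1)/q_n\sim 1/n$ for that choice as well; but you rightly abandon it for the simpler $q_n=n$, which is what the paper does.)
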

\begin{proof}
Let $\epsilon>0$. Again, from Corollary \ref{coro}-$(i)$  it follows that 
for such  $\{q_n\}$ the inequality
$$\frac{g_{n}}{\ln(p_n)^{1+\epsilon}}<\frac{p_n}{n\ln(n)}\frac{n\ln(n)}{\ln(p_n)^{1+\epsilon}}\frac{q_{n+1}-q_n +1}{q_n}$$
holds for all the infinitely many values $n$ in $\Xi(\{q_n\})$. If we take $q_n=n$, for all $n\geq 1$ 
then
$$\frac{g_{n}}{\ln(p_n)^{1+\epsilon}}<2\frac{p_n}{n\ln(n)}\frac{\ln(n)}{\ln(p_n)^{1+\epsilon}}$$
for infinitely many values of $n$, hence, from Lemma \ref{PNT}
$$\liminf_{n\to\infty}\frac{g_{n}}{\ln(p_n)^{1+\epsilon}}\leq 2\liminf_{n\to\infty}\frac{p_n}{n\ln(n)}\frac{\ln(n)}{\ln(p_n)^{1+\epsilon}}=0.$$

\end{proof}

 Another interesting consequence of Theorem \ref{mainUP} which is related to the celebrated result of Zhang (\cite{zhang}) is presented below.

\begin{corollary}\label{CoroQn}
 If there exists a sequence $\{q_n\}$ of positive numbers such that the sequence $\{Q_n\}$, as in \eqref{Qn}, is nonincreasing, then $\liminf_{n\to\infty }g_n<\infty$.
\end{corollary}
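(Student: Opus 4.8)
The plan is to argue by contradiction, exploiting the telescoping structure of the hypothesis together with Corollary \ref{coro}-$(i)$. Suppose, for the sake of contradiction, that $\liminf_{n\to\infty} g_n = \infty$; that is, $g_n\to\infty$. By Corollary \ref{coro}-$(i)$, for the given sequence $\{q_n\}$ the inequality $g_n < Q_n$ holds for infinitely many $n$, say for all $n$ in an infinite set $\Xi(\{q_n\})$. Along this set we therefore have $Q_n > g_n \to \infty$, so $Q_n$ is unbounded on $\Xi(\{q_n\})$. On the other hand, if $\{Q_n\}$ is nonincreasing it is bounded above by $Q_1$, hence bounded, and this contradicts the unboundedness just obtained. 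Therefore $\liminf_{n\to\infty} g_n < \infty$.

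More precisely, I would phrase it so as to avoid even assuming $g_n\to\infty$: since $\{Q_n\}$ is nonincreasing, $Q_n \le Q_1$ for every $n\ge 1$, so in particular $Q_n \le Q_1$ for every $n\in\Xi(\{q_n\})$. Combined with $g_n < Q_n$ on $\Xi(\{q_n\})$ this yields $g_n < Q_1$ for infinitely many $n$, whence
$$\liminf_{n\to\infty} g_n \le Q_1 < \infty.$$
This is the cleanest route and it uses only Corollary \ref{coro}-$(i)$ (which in turn rests on Theorem \ref{mainUP}, hence on the divergence of $\sum 1/p_n$ via Lemma \ref{lem}) plus the trivial monotonicity bound.

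I do not anticipate a genuine obstacle here: the content is entirely in Theorem \ref{mainUP}, and what remains is the elementary observation that an infinite subsequence bounded above forces a finite liminf. The only points requiring a word of care are (a) confirming that $\Xi(\{q_n\})$ is genuinely infinite — which is exactly the force of Corollary \ref{coro}-$(i)$ — and (b) making sure one compares $g_n$ with $Q_n$ only at indices $n$ lying in $\Xi(\{q_n\})$, since $g_n < Q_n$ need not hold for all $n$. With those two points in hand the proof is a two-line argument.
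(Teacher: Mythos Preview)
Your proof is correct and follows essentially the same route as the paper: invoke Corollary~\ref{coro}-$(i)$ to get $g_n<Q_n$ for infinitely many $n$, then use the monotonicity of $\{Q_n\}$ to produce a uniform upper bound on those $g_n$. The only cosmetic difference is that the paper appeals to the monotone convergence theorem to bound by the limit $M=\lim Q_n$, whereas you bound by $Q_1$; both yield $\liminf g_n<\infty$.
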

\begin{proof}
If there exists such $\{q_n\}$, then
$g_n<Q_n$, for infinitely many values of $n\in\Xi(\{q_n\})$. 
But, since $Q_n>0$ for all $n$, the monotone convergence theorem implies that $Q_n$ converges to a positive number $M\geq 2$,  since $g_n<Q_n$, for such indexes. Passing to the limit in this last inequality we conclude that $\liminf_{n\to\infty} g_n\leq M$.

\end{proof}

Note that a $\{q_n\}$ that satisfies Corollary \ref{CoroQn} may be obtained from the non-negative solution(s) of the second order non-linear recurrence relation 
$$ q_{n+2}\leq\left[\frac{p_n}{p_{n+1}}\left(\frac{q_{n+1}-q_{n}+1}{q_n}\right)+1\right]q_{n+1}-1.$$

The next corollary is related to the well known  Firoozbakht's conjecture (\cite{ferreira, kourbatov}). This conjecture asserts that the sequence $\{p_{n}^{1/n}\}$ is decreasing for all $n\geq 1$ and its been verified  for all primes below $4.10^{8}$. 

Let us first prove a technical lemma that will be needed.

\begin{lemma}\label{lemaLim}
$$\lim_{n\to\infty} p_{n}^{1/n}=1.$$
\end{lemma}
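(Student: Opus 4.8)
The plan is to show that $p_n^{1/n}\to 1$ by sandwiching between two sequences both tending to $1$. First I would take logarithms: since $p_n^{1/n} = \exp\left(\frac{\ln(p_n)}{n}\right)$ and $\exp$ is continuous, it suffices to prove that $\frac{\ln(p_n)}{n}\to 0$. This reduces the problem to an elementary asymptotic estimate.

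For the upper bound on $\ln(p_n)/n$, I would invoke Lemma \ref{PNT}, which gives $p_n\sim n\ln(n)$, hence $\ln(p_n)\sim\ln(n)$. Therefore $\frac{\ln(p_n)}{n}\sim\frac{\ln(n)}{n}\to 0$ as $n\to\infty$, since $\ln(n)$ grows slower than any positive power of $n$. For the lower bound, note that $p_n\geq 2$ for all $n\geq 1$, so $\ln(p_n)\geq \ln(2)>0$, and thus $\frac{\ln(p_n)}{n}>0$ for all $n$. Combining, $0<\frac{\ln(p_n)}{n}\to 0$, so $\frac{\ln(p_n)}{n}\to 0$.

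Finally, by continuity of the exponential function,
$$\lim_{n\to\infty}p_n^{1/n}=\lim_{n\to\infty}\exp\left(\frac{\ln(p_n)}{n}\right)=\exp\left(\lim_{n\to\infty}\frac{\ln(p_n)}{n}\right)=\exp(0)=1.$$
This completes the argument. There is essentially no obstacle here: the only mild subtlety is making sure the asymptotic input from Lemma \ref{PNT} is correctly transferred through the logarithm, but since $\ln(p_n)\sim\ln(n)$ is already recorded in that lemma, the step $\frac{\ln(p_n)}{n}\sim\frac{\ln(n)}{n}$ is immediate, and $\frac{\ln(n)}{n}\to 0$ is standard. If one wished to avoid even citing the prime number theorem, one could alternatively use only the elementary bound $p_n<4^n$ (Bertrand's postulate iterated), which gives $\ln(p_n)<n\ln 4$ and hence $\frac{\ln(p_n)}{n}<\ln 4$ is bounded — though this alone is not enough; the cleanest route really does use $\ln(p_n)=o(n)$, which follows from Lemma \ref{PNT}.
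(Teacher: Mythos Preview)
Your proof is correct and follows essentially the same route as the paper: write $p_n^{1/n}=\exp(\ln(p_n)/n)$, use Lemma~\ref{PNT} to replace $\ln(p_n)$ by $\ln(n)$, and conclude from $\ln(n)/n\to 0$. The paper presents this in a single line without the extra lower-bound remark or the aside on Bertrand's postulate, but the core argument is identical.
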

\begin{proof}
It follows from Lemma \ref{PNT} that
\begin{eqnarray*}
\lim_{n\to\infty}p_{n}^{1/n}=\lim_{n\to\infty}e^{\frac{\ln(p_{n})}{n}}=\lim_{n\to\infty}e^{\frac{\ln(n)}{n}}=1.
\end{eqnarray*}

\end{proof}

\begin{corollary}
For infinitely many values of $n$ the following inequality holds
$$\frac{p_{n+1}^{\frac{1}{n+1}}}{p_{n}^{\frac{1}{n}}}<\frac{\ln(n+1)+p_{n+1}^{-1+\frac{1}{n+1}}}{\ln(n)}.$$
That is
$$\liminf_{n\to\infty}\frac{p_{n+1}^{\frac{1}{n+1}}}{p_{n}^{\frac{1}{n}}}\leq 1.$$

\end{corollary}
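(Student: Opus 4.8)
The plan is to invoke Corollary~\ref{coro}$(ii)$ for a single, carefully chosen sequence $\{q_n\}$ and then reduce the resulting inequality to the desired one by elementary manipulations with exponents; the $\liminf$ statement will follow at once. Concretely, I would take
$$q_n=p_n^{\frac{n-1}{n}}\ln(n)\quad(n\geq 2),\qquad q_1=1,$$
which is a sequence of positive reals (the value of $q_1$ is immaterial, since Theorem~\ref{mainUP} only asserts the conclusion for sufficiently large $n$, and $\ln(n)>0$ for $n\geq 2$ so all the divisions below are legitimate). Then $q_{n+1}=p_{n+1}^{\frac{n}{n+1}}\ln(n+1)$, and Corollary~\ref{coro}$(ii)$ yields, for infinitely many $n$,
$$\frac{p_{n+1}}{p_n}<\frac{p_{n+1}^{\frac{n}{n+1}}\ln(n+1)+1}{p_n^{\frac{n-1}{n}}\ln(n)}.$$

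From here the argument is purely computational. I would clear denominators (everything in sight is positive), divide through by $p_n$, then by $p_{n+1}^{n/(n+1)}$, and finally by $\ln(n)$; using $\tfrac{n-1}{n}-1=-\tfrac1n$, $\,1-\tfrac{n}{n+1}=\tfrac1{n+1}$ and $-\tfrac{n}{n+1}=-1+\tfrac1{n+1}$, these steps transform the displayed inequality into exactly
$$\frac{p_{n+1}^{\frac{1}{n+1}}}{p_n^{\frac{1}{n}}}<\frac{\ln(n+1)+p_{n+1}^{-1+\frac{1}{n+1}}}{\ln(n)}$$
for the same infinitely many $n$, which is the first claim. For the second, note that $p_{n+1}^{1/(n+1)}\to 1$ and $p_n^{1/n}\to1$ by Lemma~\ref{lemaLim} (equivalently, along that subsequence the right-hand side above tends to $1$, since $\ln(n+1)/\ln(n)\to1$ and $p_{n+1}^{-1+1/(n+1)}/\ln(n)\to0$), so that $\liminf_{n\to\infty}p_{n+1}^{1/(n+1)}/p_n^{1/n}\leq 1$.

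The only step that is not mechanical is the choice of $\{q_n\}$, and this is where I expect any difficulty to lie: one must reverse-engineer a $q_n$ so that the quotient $(q_{n+1}+1)/q_n$ furnished by Corollary~\ref{coro}$(ii)$, after pulling out suitable powers of $p_n$ and $p_{n+1}$, reassembles into the ratio of the $(n+1)$th root of $p_{n+1}$ to the $n$th root of $p_n$. The factor $p_n^{(n-1)/n}$ is precisely what converts $p_{n+1}/p_n$ into such a root-ratio, the factor $\ln(n)$ produces the logarithmic denominator, and the constant $1$ in the numerator of Corollary~\ref{coro}$(ii)$ is what leaves behind the otherwise mysterious term $p_{n+1}^{-1+1/(n+1)}$ after the final division by $p_{n+1}^{n/(n+1)}$. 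Once the sequence is guessed correctly there is no obstruction, and the remainder is the exponent bookkeeping described above.
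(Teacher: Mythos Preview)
Your proof is correct and coincides with the paper's own argument: the paper also applies Corollary~\ref{coro}$(ii)$ with the choice $q_n=p_n^{1-1/n}\ln(n)$ (which is your $p_n^{(n-1)/n}\ln(n)$) and then invokes Lemma~\ref{lemaLim} for the $\liminf$. Your write-up is in fact more careful than the paper's, both in spelling out the exponent bookkeeping and in patching the value of $q_1$ so that the sequence is genuinely positive.
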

\begin{proof}
In Corollary \ref{coro} if we take $q_n=p_{n}^{1-\frac{1}{n}}\ln(n)$ 
and apply it in $(ii)$ then we obtain the inequality
$$\frac{p_{n+1}^{\frac{1}{n+1}}}{p_{n}^{\frac{1}{n}}}<\frac{\ln(n+1)+p_{n+1}^{-1+\frac{1}{n+1}}}{\ln(n)},$$
for infinitely many values of $n$. Therefore, from Lemma \ref{lemaLim}
$$\liminf_{n\to\infty}\frac{p_{n+1}^{\frac{1}{n+1}}}{p_{n}^{\frac{1}{n}}}\leq 1.$$

\end{proof}

As a consequence of the previous corollary, following the  same ideas of the 
proof of \cite[Theorem 1]{kourbatov}, but for infinitely many indexes $n$,  we can conclude that:

\begin{corollary}\label{coroAux}
For infinitely many values of $n$ the following inequality holds
$$g_n<\ln(p_n)^{2}-\ln(p_n)-1,$$
with $n\geq 10$.
\end{corollary}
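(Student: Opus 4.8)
The statement is a quantitative companion to the preceding corollary, and the plan is to prove it in essentially two lines using material already available in this section, and then to explain the route through Firoozbakht's inequality that the text alludes to. By the earlier corollary $\liminf_{n\to\infty}\frac{g_n}{\ln(p_n)^{1+\epsilon}}=0$, applied with any fixed $\epsilon\in(0,1)$ — say $\epsilon=\tfrac12$ — there are infinitely many $n$ with $g_n<\ln(p_n)^{3/2}$. Since $\ln(p_n)^{3/2}=o\bigl(\ln(p_n)^2-\ln(p_n)-1\bigr)$, every sufficiently large such $n$ (in particular $n\ge10$) satisfies $g_n<\ln(p_n)^2-\ln(p_n)-1$, which is the claim. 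There is no real obstacle along this route; the work has been done in the corollaries above.

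What is worth spelling out is the alternative route through Firoozbakht's inequality, i.e.\ the adaptation of \cite[Theorem 1]{kourbatov} to infinitely many indices. First one upgrades the preceding corollary to the statement that $p_{n+1}^{1/(n+1)}\le p_n^{1/n}$ — equivalently $p_{n+1}\le p_n^{1+1/n}$ — holds for infinitely many $n$; this is read off either from the preceding corollary's conclusion $\liminf_{n\to\infty}p_{n+1}^{1/(n+1)}/p_n^{1/n}\le1$, or from the first paragraph by rewriting Firoozbakht's inequality as $\ln(1+g_n/p_n)<\ln(p_n)/n$ and using $\ln(1+x)<x$ together with Lemmas \ref{PNT} and \ref{rosser}. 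For such $n$,
$$g_n=p_{n+1}-p_n<p_n\bigl(p_n^{1/n}-1\bigr)=p_n\bigl(e^{\ln(p_n)/n}-1\bigr),$$
so it remains to establish the purely analytic comparison, valid for every $n\ge10$,
$$p_n\bigl(e^{\ln(p_n)/n}-1\bigr)<\ln(p_n)^2-\ln(p_n)-1.$$

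This last inequality is the part I expect to require care. Taking logarithms it becomes $\ln(p_n)/n<\ln\bigl(1+(\ln(p_n)^2-\ln(p_n)-1)/p_n\bigr)$; expanding via $\ln(1+x)>x-\tfrac12x^2$ and absorbing the negligible quadratic term, it reduces to $p_n/n<\ln(p_n)-1-1/\ln(p_n)-(\text{smaller terms})$, i.e.\ to a lower bound for $n=\pi(p_n)$ only marginally stronger than $\pi(x)>x/(\ln x-1)$. Every estimate here holds by a margin that shrinks to $0$, so one cannot be loose with the constants: the argument needs a sharp effective form of the prime number theorem — the Rosser--Schoenfeld/Dusart bounds for $\pi(x)$, exactly as in \cite[Theorem 1]{kourbatov} — valid above an explicit threshold, with the finitely many remaining $n$ in the range $10\le n<\text{threshold}$ checked numerically. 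That the constant $10$ is essentially optimal is visible at $n=9$, where $g_9=6$ already exceeds $\ln(23)^2-\ln(23)-1\approx5.70$.
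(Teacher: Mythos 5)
Your first paragraph already proves the statement, and by a genuinely different and simpler route than the paper's. The paper deduces Corollary \ref{coroAux} from the immediately preceding corollary (the ratio inequality coming from Corollary \ref{coro}$(ii)$ with $q_n=p_n^{1-1/n}\ln(n)$), asserting that one then follows the proof of \cite[Theorem 1]{kourbatov} ``for infinitely many indexes''; you instead go back to the earlier corollary $\liminf_{n\to\infty} g_n/\ln(p_n)^{1+\epsilon}=0$, take $\epsilon=\tfrac12$, and finish with the elementary comparison $\ln(p_n)^{3/2}=o\big(\ln(p_n)^2-\ln(p_n)-1\big)$, which yields infinitely many admissible indices, all but finitely many of them $\ge 10$. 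Your route buys robustness: it uses only results already proved in the section, with no effective prime-counting estimates and no numerical verification. The paper's route, as your second and third paragraphs correctly sense, is more delicate than its one-line attribution suggests: the preceding corollary only gives $\liminf_{n\to\infty} p_{n+1}^{1/(n+1)}/p_n^{1/n}\le 1$, with an upper bound that exceeds $1$ for every $n$, and this does not by itself deliver Firoozbakht's inequality for infinitely many $n$ — so the ``upgrade'' you propose to read off from that liminf is not valid as stated (the ratio could exceed $1$ for all $n$ while tending to $1$), and the subsequent analytic step $p_n\big(p_n^{1/n}-1\big)<\ln(p_n)^2-\ln(p_n)-1$ for $n\ge 10$ does require the sharp Rosser--Schoenfeld/Dusart-type bounds used by Kourbatov, exactly as you say. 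Since your main argument does not depend on that sketch, your proof stands; present the first paragraph as the proof and the Firoozbakht/Kourbatov route only as a remark on what the paper intended.
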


However, it follows from Corollary \ref{coro} 
that it possible to get a sharper bound than this one presented in Corollary \ref{coroAux}. 
For instance:

\begin{corollary}\label{sharpB}
For infinitely many values of $n$ the following inequality holds
$$g_n<(n+1)\ln(n+1)-n\ln(n)+1,$$
with $n$ large.
\end{corollary}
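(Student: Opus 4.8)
The plan is to read the inequality off from Corollary~\ref{coro}~$(i)$ after choosing $\{q_n\}$ so that the quantity $Q_n$ of \eqref{Qn} matches the stated right-hand side. The natural first attempt is $q_n=n\ln(n)$ for $n\geq2$ (with $q_1$ any positive number): then $q_{n+1}-q_n+1=(n+1)\ln(n+1)-n\ln(n)+1$ and $q_n=n\ln(n)$, so Corollary~\ref{coro}~$(i)$ gives, for infinitely many $n$,
$$g_n<\frac{p_n}{n\ln(n)}\bigl((n+1)\ln(n+1)-n\ln(n)+1\bigr).$$
Since $p_n/(n\ln(n))\to1$ by Lemma~\ref{PNT} while the second factor tends to $\infty$, this already yields $\liminf_{n\to\infty} g_n/\bigl((n+1)\ln(n+1)-n\ln(n)+1\bigr)\leq1$, and, using $\ln(n)\sim\ln(p_n)$, that $g_n<(1+\epsilon)\ln(p_n)$ holds infinitely often for every $\epsilon>0$.

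To land exactly on the bound as stated one must remove the factor $p_n/(n\ln(n))$, and this is the delicate step: by Lemma~\ref{rosser} one has $n\ln(n)<p_n$, so that factor is strictly larger than $1$ and cannot simply be discarded. My plan to handle this is to perturb the choice above and take the sequence given by a positive $q_1$ together with the first-order recurrence
$$q_{n+1}=q_n\Bigl(1+\frac{(n+1)\ln(n+1)-n\ln(n)+1}{p_n}\Bigr)-1,$$
which is engineered precisely so that $Q_n=p_n\,\dfrac{q_{n+1}-q_n+1}{q_n}$ is \emph{identically} equal to $(n+1)\ln(n+1)-n\ln(n)+1$. Granting that this $\{q_n\}$ consists of positive numbers, Corollary~\ref{coro}~$(i)$ applied to it finishes the proof; the clause ``$n$ large'' then only serves to guarantee positivity of $\{q_n\}$ from some index on, and is in any case harmless since the bound already exceeds the trivial $g_n\geq2$ for all $n$.

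The main obstacle is exactly this positivity. Solving the recurrence gives $q_{n+1}=P_n\bigl(q_1-\sum_{k=1}^{n}P_k^{-1}\bigr)$ with $P_n=\prod_{k=1}^{n}\bigl(1+\frac{(k+1)\ln(k+1)-k\ln(k)+1}{p_k}\bigr)$, so one needs $q_1>\sum_{k\geq1}P_k^{-1}$, i.e.\ the series $\sum_k P_k^{-1}$ must converge. Estimating $P_n$ from below is thus the crux: Lemmas~\ref{rosser} and \ref{PNT} give $(k+1)\ln(k+1)-k\ln(k)+1\sim\ln(k)$ and $p_k\sim k\ln(k)$, so each factor is $1+\frac{1+o(1)}{k}$ and $P_n$ grows only like a constant multiple of $n$, which makes $\sum_k P_k^{-1}$ behave like the harmonic series. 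I expect this to be where the real work lies: unless one builds in additional slack --- a slightly faster-growing $\{q_n\}$, or a target slightly above $(n+1)\ln(n+1)-n\ln(n)+1$ --- this convergence is in jeopardy, and one then has to fall back on the weaker bound carrying the $p_n/(n\ln(n))$ factor together with its $\liminf$ form from the first paragraph.
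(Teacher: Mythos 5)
Your proposal does not reach the inequality as stated, and you essentially concede this yourself. The first step (taking $q_n=n\ln(n)$ in Corollary \ref{coro}-$(i)$) yields only $g_n<\frac{p_n}{n\ln(n)}\bigl((n+1)\ln(n+1)-n\ln(n)+1\bigr)$ for infinitely many $n$, and you are right that the factor $\frac{p_n}{n\ln(n)}>1$ (Lemma \ref{rosser}) cannot be discarded: since in fact $p_n=n\bigl(\ln(n)+\ln\ln(n)-1+o(1)\bigr)$, this factor inflates the bound by an unbounded amount, roughly $\ln\ln(n)$, so what you actually obtain is $g_n<\ln(n)+\ln\ln(n)+O(1)$ infinitely often, strictly weaker than the claim. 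Your repair --- defining $q_{n+1}=q_n\bigl(1+\frac{c_n}{p_n}\bigr)-1$ with $c_n=(n+1)\ln(n+1)-n\ln(n)+1$ so that $Q_n\equiv c_n$ --- is algebraically correct, including the solution $q_{n+1}=P_n\bigl(q_1-\sum_{k=1}^{n}P_k^{-1}\bigr)$, but the positivity obstruction you flag is fatal, not merely ``in jeopardy'': from the refined asymptotic above, $\frac{c_k}{p_k}=\frac{1}{k}\bigl(1-\frac{\ln\ln(k)+O(1)}{\ln(k)}\bigr)$, hence $\ln P_n=\ln(n)-\bigl(\tfrac12+o(1)\bigr)(\ln\ln(n))^{2}$, so $P_k^{-1}\geq\frac{1}{k}$ for all large $k$ and $\sum_k P_k^{-1}$ diverges. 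Consequently, for every choice of $q_1>0$ the sequence $\{q_n\}$ becomes, and then stays, negative from some index on, so Theorem \ref{mainUP} cannot be applied to it. The proposal therefore establishes only the weakened statements of your first paragraph (the $\liminf\leq 1$ form, and $g_n<(1+\epsilon)\ln(p_n)$ infinitely often), not the corollary as printed.

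For comparison, the paper's own proof is exactly your first paragraph: it takes $q_n=n\ln(n)$ in Corollary \ref{coro} and ``applies Lemma \ref{PNT}'', which amounts to silently replacing $\frac{p_n}{n\ln(n)}$ by $1$; it thus proves only the same weakened form you derived, not the stated inequality with the exact constant. So your diagnosis of where the difficulty lies is precisely right, and the gap you could not close is equally a gap in the paper's argument; but since your repair provably fails and you supply no alternative route, the statement as printed remains unproved by your proposal.
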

\begin{proof}
It is enough to take $q_n=n\ln(n)$ in Corollary \ref{coro} and apply Lemma \ref{PNT}. 
\end{proof}

Let $b\geq 1$. Now, in order to show that the bound presented in Corollary \ref{sharpB} is sharper that the one presented in Corollary \ref{coroAux}, that is,   
$$(n+1)\ln(n+1)-n\ln(n)+1<\ln(p_n)^{2}-\ln(p_n)-b,$$
for all $n$ sufficiently large, note that this last   
 is equivalent to the following inequality
$$\frac{(n+1)\ln(n+1)-n\ln(n)+1}{\ln(p_n)^{2}-\ln(p_n)-b}<1,$$
for $n$ sufficiently large and $\ln(p_n)^{2}-\ln(p_n)-b\neq 0$.

To see that this last inequality holds for all $n$ sufficiently large, we may apply
the limit to the left hand side of this inequality and obtain that
\begin{eqnarray*}
\lim_{n\to\infty} \frac{(n+1)\ln(n+1)-n\ln(n)+1}{\ln(p_n)^{2}-\ln(p_n)-b}&=&
\lim_{n\to\infty} \frac{\ln(n+1)}{\ln(p_n)}\frac{1+\frac{\ln\left[\left(\frac{n+1}{n}\right)^{n}\right]}{\ln(n+1)} +\frac{1}{\ln(n+1)}}{\ln(p_n)-1-\frac{b}{\ln(p_n)}}\\
&=&0,
\end{eqnarray*}
since from Lemma \ref{PNT} 
$$\lim_{n\to \infty}\frac{\ln(n+1)}{\ln(p_n)}=1 .$$ 

\section{Final remarks}

In this paper we presented an alternative and elementary method to deal with gaps of consecutive primes. The central idea is to use a consequence of the Kummer's test for convergence of series of positive terms and the divergence of the series of the reciprocals of the prime numbers. The main feature  is that the proposed method  demands  less technical efforts to obtain bounds for the gaps of consecutive primes. 
 However the task of finding  suitable auxiliary sequences $\{q_n\}$ may not be easy. This behaviour  is inherited from the Kummer's test, which is more theoretical-functional than practical.

Let us conclude the paper by relating our results to the twin-prime conjecture.
%, the celebrated result of Zhang \cite{zhang} and its several refinements, which tell us that 
%$$\liminf_{n\to \infty} \,g_n<\infty.$$

For instance, if we choose the sequence $\{q_n\}$ in Theorem \ref{mainUP}
as
$$q_n = \begin{cases} n\ln(n), & \mbox{if } n\mbox{ is even} \\ (n-1)\ln(n), & \mbox{if } n\mbox{ is odd} \end{cases}
$$
then, for infinitely many values of $n$ the inequality
\begin{equation}
\label{twin}
    g_{n}< p_n \frac{q_{n+1}-q_n+1}{q_n}
\end{equation}
holds. In particular, note that, for $n$ even 
$$\lim_{n\to\infty} p_n \frac{q_{n+1}-q_n+1}{q_n}=2.$$
That is, if  $\Xi(\{q_n\})$ has infinitely many even numbers then  $g_n=2$ for all $n\in \Xi(\{q_n\})$ .
Hence, if \eqref{twin} holds for infinitely many even values 
of $n$ then the twin-prime conjecture would be true.  However, for $n$ odd we have that
$$\lim_{n\to\infty} p_n \frac{q_{n+1}-q_n+1}{q_n}= \infty,$$
which gives no information about the twin-prime conjecture.

\end{document}